\documentclass[12pt,twoside]{amsart}
\setlength{\textheight}{8in}
\setlength{\textwidth}{6in}
\setlength{\topmargin}{.7in}
\setlength{\oddsidemargin}{0.2in}
\setlength{\evensidemargin}{0.2in}

\usepackage{amsfonts,amsmath}
\usepackage{pinlabel}
\usepackage{enumerate}
\usepackage{mathtools}
\usepackage{xcolor,colortbl}
\usepackage{url}
\usepackage{cleveref}

\def\endpf{\relax\ifmmode\expandafter\endproofmath\else
  \unskip\nobreak\hfil\penalty50\hskip.75em\hbox{}\nobreak\hfil\bull
  {\parfillskip=0pt \finalhyphendemerits=0 \bigbreak}\fi}
\def\bull{\vbox{\hrule\hbox{\vrule\kern3pt\vbox{\kern6pt}\kern3pt\vrule}\hrule}}

\newtheorem{defn}{Definition}[section]
\newtheorem{lemma}[defn]{Lemma}

\newtheorem{remark}[defn]{Remark}
\newtheorem{proposition}[defn]{Proposition}

\newtheorem{maintheorem}{Theorem}

\newtheorem{example}[defn]{Example}

\newcommand{\zz}{{\mathbb Z}}

\newcommand{\nn}{{\mathbb N}}
\newcommand{\qq}{{\mathbb Q}}

\newcommand{\spin}{\ifmmode{\rm Spin}\else{${\rm spin}$\ }\fi}
\newcommand{\spinc}{\ifmmode{{\rm Spin}^c}\else{${\rm spin}^c$\ }\fi}

\newcommand{\tors}{{\it Tors}}

\newcommand{\cali}{\mathcal{I}}

\newcommand{\CP}{\mathbb{CP}}
\newcommand{\PP}{\mathbb{P}}

\definecolor{Gray}{gray}{0.8}

\newenvironment{narrow}[2]{%
 \begin{list}{}{%
  \setlength{\topsep}{0pt}%
  \setlength{\leftmargin}{#1}%
  \setlength{\rightmargin}{#2}%
  \setlength{\listparindent}{\parindent}%
  \setlength{\itemindent}{\parindent}%
  \setlength{\parsep}{\parskip}%
 }%
\item[]}{\end{list}}

\newif\ifpic
\picfalse    


\DeclareMathOperator{\Aut}{Aut}

\begin{document}

\title{Smooth, nonsymplectic embeddings of rational balls in the complex projective plane}
\author[Brendan Owens]{Brendan Owens}
\address{School of Mathematics and Statistics \newline\indent 
University of Glasgow \newline\indent 
Glasgow, G12 8QQ, United Kingdom}
\email{brendan.owens@glasgow.ac.uk}
\date{\today}
\thanks{}

\begin{abstract}  We exhibit an infinite family of rational homology balls which embed smoothly but not symplectically in the complex projective plane.  We also obtain a new lattice embedding obstruction from Donaldson's diagonalisation theorem, and use this to show that no two of our examples may be embedded disjointly.
\end{abstract}

\maketitle

\pagestyle{myheadings}
\markboth{BRENDAN OWENS}{NONSYMPLECTIC EMBEDDINGS OF RATIONAL HOMOLOGY BALLS}


\section{Introduction}
\label{sec:intro}

A Markov triple is a positive integer solution $(p_1,p_2,p_3)$ to the Markov  equation
\begin{equation}\label{eq:Mkv}p_1^2+p_2^2+p_3^2=3p_1p_2p_3.
\end{equation}
Each Markov triple gives rise to an embedding
\begin{equation}\label{eq:emb}
\bigsqcup_{i=1}^3 B_{p_i,\,q_i}\hookrightarrow\CP^2
\end{equation}
of a disjoint union of three rational homology balls in the complex projective plane.  Here $B_{p,q}$ is the rational homology ball smoothing of the quotient singularity $\frac1{p^2}(1,pq-1)$.  The embedding in \eqref{eq:emb} arises by smoothing the three singular points in the weighted projective space $\PP(p_1^2,p_2^2,p_3^2)$,
and the numbers $q_i$ are given by
$$q_i=\pm3p_j/p_k\pmod{p_i},$$ where $i,j,k$ is a permutation of $1,2,3$.  The apparent sign ambiguity here is due to the fact that $B_{p,q}\cong B_{p,p-q}$.

Hacking and Prokhorov proved in \cite{hp} that any projective surface with quotient singularities which admits a smoothing to $\CP^2$ is $\qq$-Gorenstein deformation equivalent to some $\PP(p_1^2,p_2^2,p_3^2)$ as above. Evans and Smith proved in \cite{es} that any disjoint union $\bigsqcup_{i\in\cali}B_{p_i,\,q_i}$ which admits a symplectic embedding in $\CP^2$  arises in this way, with $|\cali|\le3$.

Let $F(2n-1)$ denote the $n$th odd Fibonacci number, defined by the recursion
\begin{equation}\label{eq:Fib}
F(2n+3)=3F(2n+1)-F(2n-1),\quad F(1)=1,\quad F(3)=2.
\end{equation}
Then $(1,F(2n-1),F(2n+1))$ is a Markov triple for each $n\in\nn$, showing in particular that
$B_{F(2n+1),F(2n-3)}$ admits a symplectic embedding in $\CP^2$ for each $n>1$.

In \cite{balls} we mentioned but overlooked the significance of the following result.  Here $\Delta_{p,q}$ is a properly embedded 
surface in the 4-ball whose double branched cover is $B_{p,q}$, and $P_+$ is the unknotted M\"{o}bius band in the 4-ball with normal Euler number $2$; see \cite {balls} for further details.

\begin{maintheorem}
\label{thm:CP2}
For each $n\in\nn$, the slice surface $\Delta_{F(2n+1),F(2n-1)}$ admits a simple embedding as a sublevel surface of the unknotted M\"{o}bius band $P_{+}$.  Taking double branched covers yields a simple smooth embedding
$$B_{F(2n+1),F(2n-1)}\hookrightarrow \CP^2.$$
If $n>1$, then $B_{F(2n+1),F(2n-1)}$ does not embed symplectically in $\CP^2$.
\end{maintheorem}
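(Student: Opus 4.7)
My plan is to handle the three assertions separately. For the first two --- the existence of a simple sublevel embedding $\Delta_{F(2n+1),F(2n-1)}\hookrightarrow P_+$ and the resulting smooth embedding $B_{F(2n+1),F(2n-1)}\hookrightarrow\CP^2$ --- I would reduce to constructions already implicit in~\cite{balls}: an inductive argument in $n$, using the Fibonacci recursion $F(2n+3)=3F(2n+1)-F(2n-1)$, should produce the desired sublevel embedding from the band/handle presentations developed there, and the smooth embedding in $\CP^2$ then follows by passing to double branched covers, since the branched double cover of $B^4$ over $P_+$ is $\CP^2$ minus a ball.

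The substantive content is the nonsymplecticity for $n>1$, which I would prove by contradiction using Evans--Smith and a short congruence computation. Suppose $B_{F(2n+1),F(2n-1)}$ embeds symplectically in $\CP^2$. By~\cite{es}, the embedding arises from a Markov triple $(p_1,p_2,p_3)$ with, after relabeling, $p_1=F(2n+1)$ and
\[
3p_2\equiv\pm F(2n-1)\,p_3\pmod{F(2n+1)}.
\]
Squaring and using $p_2^2+p_3^2\equiv 0\pmod{F(2n+1)}$ from the Markov equation, then cancelling $p_3^2$ (invertible modulo $F(2n+1)$ since Markov triples consist of pairwise coprime entries), I obtain $F(2n-1)^2\equiv -9\pmod{F(2n+1)}$. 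On the other hand, the Markov equation applied to the Fibonacci triple $(1,F(2n-1),F(2n+1))$ gives $F(2n-1)^2\equiv -1\pmod{F(2n+1)}$. Comparing yields $F(2n+1)\mid 8$, which is false for $n\ge 2$ since the odd-indexed Fibonacci numbers $5,13,34,\dots$ never divide $8$.

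The key creative step, which I expect to be the main obstacle on a first attempt, is the idea of squaring the linear congruence: this removes the need to know which square root of $-1$ modulo $F(2n+1)$ arises from the Markov triple, so the argument avoids any appeal to Markov uniqueness and works for every Markov triple containing $F(2n+1)$ at once. The explicit verification of the sublevel embedding is, by contrast, essentially bookkeeping once one has the framework of~\cite{balls}, and the remainder of the congruence computation is entirely elementary.
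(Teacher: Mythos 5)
Your treatment of the nonsymplecticity is correct and is essentially the argument the paper gives in \Cref{lem:nonsymp}: the paper packages the computation via the ``characteristic number'' $u$ of a Markov triple, with $u^2\equiv-1\pmod{F(2n+1)}$ and $F(2n-1)\equiv\pm3u$, and squares; you square the congruence $3p_2\equiv\pm F(2n-1)p_3\pmod{F(2n+1)}$ directly and use $p_2^2+p_3^2\equiv0\pmod{F(2n+1)}$ together with pairwise coprimality, arriving at the same conclusion $F(2n-1)^2\equiv-9$, hence $F(2n+1)\mid 8$. This is the same trick in different notation (in particular it is not a new route around the paper's lemma), and it is sound; note the paper's citation of \cite[Theorem 4.15]{es} also requires $F(2n+1)$ to be the maximum of the triple, but your version does not need that hypothesis, which is harmless.

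The genuine gap is in the topological half, which is where the real content of the theorem lies and which you dismiss as bookkeeping. First, the sublevel embedding is not already ``implicit in \cite{balls}'': the paper must construct a new surface $\Sigma$ bounded by the unknot which contains $\Delta_{F(2n+1),F(2n-1)}$ as a sublevel surface (this uses the expansions $F(2n+1)/F(2n-1)=[3^{n-1},2]$ and $F(2n+1)^2/(F(2n+1)F(2n-1)-1)=[3^{n-1},5,3^{n-2},2]$ and an explicit band-move diagram), and then prove by an explicit induction on band slides and isotopies that $\Sigma$ is the unknotted M\"{o}bius band $P_+$; saying that an inductive argument using the Fibonacci recursion ``should produce'' this names a strategy but supplies none of the moves, so as written it is not a proof. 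Second, and more seriously, you never address the word \emph{simple} in the statement. Simplicity means the resulting rational blow up of $\CP^2$ is obtained by ordinary blow ups, and the paper verifies it by identifying the double branched cover of the relevant pushed-in chessboard surface with the linear plumbing on $(-3)^{n-1},-2,-1,(-3)^{n-2},-2$ and blowing down $(-1)$-spheres to reach $\CP^2\#(2n-1)\overline{\CP^2}$ (minus a ball). Without some such computation your argument yields at most a smooth embedding, not a simple one. Relatedly, your assessment that the congruence is ``the key creative step'' inverts the weight of the proof: that step is a short lemma, while the diagrammatic construction and the simplicity verification carry the theorem.
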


\Cref{thm:CP2} gives the first-known smooth embeddings of rational balls $B_{p,q}$ in the complex projective plane that do not arise from symplectic embeddings.  This shows that the smooth embedding problem has an as-yet-unknown solution which differs from that to the symplectic problem solved by Evans-Smith.  Bulent Tosun has informed the author that work of Nemirovski-Segal \cite{NS} implies the existence of a rational ball, bounded by a Seifert fibred space with 3 exceptional fibres, which embeds smoothly but not sympectically in $\CP^2$.  Most of the embeddings obtained in \cite{balls}, but not those given in \Cref{thm:CP2}, have since been reproved and generalised by different methods in \cite{psu}.

A conjecture of Koll\'{a}r \cite{kollar} would imply that at most three rational balls $B_{p_i,\,q_i}$ may embed smoothly and disjointly in $\CP^2$.  The following result gives some mild support to this conjecture.

\begin{maintheorem}
\label{thm:CP2disjoint}
It is not possible to smoothly embed a disjoint union $\bigsqcup_{i\in\cali}B_{p_i,\,q_i}$ of two or more of the balls from \Cref{thm:CP2} in $\CP^2$, where each $(p_i,q_i)$ is a consecutive pair of odd Fibonacci numbers.
\end{maintheorem}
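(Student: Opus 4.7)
My plan is to derive a contradiction by applying Donaldson's diagonalisation theorem to a closed, positive-definite 4-manifold built from the hypothetical embedding.  Suppose for contradiction that there is a disjoint smooth embedding
\[
\bigsqcup_{i\in\cali} B_{F(2n_i+1),\,F(2n_i-1)} \hookrightarrow \CP^2
\]
with $|\cali|\ge 2$; after discarding excess balls we may assume $|\cali|=2$.  Write $(p_i,q_i)=(F(2n_i+1),F(2n_i-1))$ and let $W$ denote the complement.  Since each $B_{p_i,q_i}$ is a rational homology ball, $W$ has $b_2=1$ and positive-definite intersection form, and $\partial W=\bigsqcup_i -L(p_i^2,p_iq_i-1)$.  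Each lens space $L(p^2,pq-1)$ bounds the standard negative-definite linear plumbing $M_{p,q}$ associated to the Hirzebruch--Jung continued-fraction expansion of $p^2/(pq-1)$.  Attaching $-M_{p_i,q_i}$ (positive definite) along each boundary component of $W$ produces a closed, smooth, positive-definite 4-manifold $X$ whose second Betti number equals $1$ plus the sum of the lengths of the two plumbing chains.

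By Donaldson's theorem, the intersection form of $X$ is isometric to the standard Euclidean form $(\zz^{b_2(X)},I)$.  Combining this with the natural maps from the homology of each piece, I obtain an isometric embedding of the lattice
\[
\langle h\rangle \oplus (-Q_{M_{p_1,q_1}}) \oplus (-Q_{M_{p_2,q_2}})
\]
into $\zz^{b_2(X)}$, where $h$ is the image of the hyperplane class of $\CP^2$ (of square $+1$) and $-Q_{M_{p_i,q_i}}$ is the positive-definite form of $-M_{p_i,q_i}$.  I would then invoke the new lattice embedding obstruction announced in the abstract, which should restrict how $h$ and the plumbing generators can coexist in the standard lattice, incorporating linking-form data from the lens space boundaries and so yielding more information than the classical pairing-form or $d$-invariant obstructions.

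The main obstacle is the combinatorial analysis that follows.  This requires an explicit description of the Hirzebruch--Jung expansion of $F(2n+1)^2/(F(2n+1)F(2n-1)-1)$, which should be obtainable recursively from the Markov mutation sending $(1,F(2n-1),F(2n+1))\mapsto (F(2n-1),F(2n+1),F(2n+3))$ and hence from the Fibonacci relation $F(2n+3)=3F(2n+1)-F(2n-1)$.  With that in hand, I expect to argue by induction on $\min(n_1,n_2)$: the inductive step would use the Fibonacci recursion to shorten one of the chains, while the base case amounts to checking directly that no lattice embedding of the two smallest chains together with a square-$1$ vector is consistent with the obstruction.  A further subtlety is that the image of $h$ in $X$ depends on the gluing along each lens space boundary, so this gluing must be tracked via the linking pairings in order to invoke the obstruction correctly.
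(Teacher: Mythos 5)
Your general strategy (excise the balls, fill the lens space boundaries with positive-definite plumbings, apply Donaldson) is indeed the paper's starting point, but as written the proposal has two substantive gaps. First, the crux of the argument is the precise form of the extra obstruction, and you do not supply it: as the paper notes, since $M=\CP^2\setminus\bigsqcup B_i$ embeds in $\CP^2$ and each boundary lens space bounds a rational ball, both $\Lambda_M$ and $\Lambda_C$ admit finite-index embeddings into diagonal lattices, so a block-diagonal embedding $\Lambda_M\oplus\Lambda_C\hookrightarrow\zz^m$ \emph{always} exists and the classical Donaldson obstruction is vacuous here. The actual obstruction (\Cref{prop:Don}) is not a linking-form refinement, as you guess, but the statement that in the embedding coming from the closed manifold $X=M\cup C$ every unit vector of $\zz^m$ must pair nontrivially with \emph{both} $\Lambda_M$ and $\Lambda_C$, and the generator of $\Lambda_M$ must map to a primitive vector; this is derived from $H_1(M)=0$ (\Cref{lem:Zhom}) by writing a unit class as $e=e_M+e_C$ in relative homology and using that $H_2(M,Y)$ and $H_2(C,Y)$ are the dual lattices of $\Lambda_M$ and $\Lambda_C$. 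Without formulating and proving something of this kind, your plan cannot produce a contradiction.

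Second, the combinatorial heart of the proof is missing rather than sketched: you defer exactly the part where the work lies. The paper first computes that the positive-definite filling for $B_{F(2n+1),F(2n-1)}$ is the linear plumbing with weights $[3^{n-1},2,2,3^{n-1},2]$ (note this is the \emph{dual} string, not the orientation-reversal of the plumbing on $F(2n+1)^2/(F(2n+1)F(2n-1)-1)=[3^{n-1},5,3^{n-2},2]$ that you propose; ``$-M_{p,q}$'' has the wrong oriented boundary, and the distinction matters because the subsequent analysis depends on all weights being $2$ or $3$). It then classifies, up to $\Aut(\zz^m)$, all embeddings of $\Lambda(2,2,2)$ and of $\Lambda(3^{n-1},2,2,3^{n-1},2)$ into $\zz^m$ (\Cref{lem:Cemb}): each such lattice of rank $r=2n+1$ embeds only into a $\zz^{r}$, a $\zz^{r+1}$, or a $\zz^{4n}$ sublattice, with controlled orthogonal complements. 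Feeding this into \Cref{prop:Don} rules out the first two options for each factor, forces $4n\le 2k+2n+3$ (hence $n\in\{k,k+1\}$ for two balls indexed $k\le n$), and the remaining cases are killed by explicit orthogonality and projection arguments, not by an induction on $\min(n_1,n_2)$ of the kind you anticipate. Until you prove an analogue of the classification lemma and carry out this case analysis, the proposal is a plan rather than a proof.
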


This result uses a new obstruction derived from Donaldson's diagonalisation theorem \cite{don}.  This is stated in \Cref{prop:Don}.

 \
 
\noindent{\bf Corrigendum to \cite{balls}.}   In \cite[sentence after Theorem 5, and Remark 4.1]{balls} we incorrectly stated that $B_{F(2n+1),F(2n-1)}$ embeds symplectically in $\CP^2$.  I am very grateful to Giancarlo Urz\'{u}a who reminded me that the Markov triple $(1,F(2n-1),F(2n+1))$ gives rise to a symplectic embedding in $\CP^2$ of $B_{F(2n+1),F(2n-3)}$, and not of $B_{F(2n+1),F(2n-1)}$.

 \
 
 \noindent{\bf Further acknowledgements.}    I am grateful to Jonny Evans, Marco Golla, Ana Lecuona, Yank\i\ Lekili, Duncan McCoy, Bulent Tosun, and Giancarlo Urz\'{u}a for helpful comments and conversations.  I also thank the anonymous referee for helpful suggestions.


\section{Smooth embeddings}
\label{sec:emb}

In this section we prove \Cref{thm:CP2}, using the method from \cite{balls}.  

We refer the reader to \cite{aigner} for an excellent and readable source on Markov numbers.  Suppose that $(p,a,b)$ is a solution to the Markov equation \eqref{eq:Mkv} with $p>a,b$.  
By \cite[Corollary 3.4]{aigner}, the integers in a Markov triple are pairwise relatively prime, so that there are unique solutions $x=u,u'$ to $$b\equiv \pm xa\pmod{p}.$$
These satisfy $u+u'\equiv 0\pmod p$, so that one of them (say $u$) is between 0 and $p/2$; we call this number $u$ the characteristic number of the Markov triple $(p,a,b)$.
The Markov equation gives $a^2+b^2\equiv 0\pmod p$, from which it follows that $$u^2\equiv -1\pmod p.$$
I am grateful to Jonny Evans for helping me to see the following result.
\begin{lemma}
Let $n\in\nn$.  The rational ball $B_{F(2n+1),F(2n-1)}$ embeds symplectically in $\CP^2$ if and only if $n=1$.
\label{lem:nonsymp}
\end{lemma}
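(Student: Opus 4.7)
The plan is to combine the Evans--Smith classification \cite{es} with a direct analysis of Markov triples containing $F(2n+1)$. The case $n=1$ is immediate: the Markov triple $(1,1,2)=(1,F(1),F(3))$ produces a symplectic embedding of $B_{F(3),F(1)} = B_{2,1}$ in $\CP^2$, as one sees from $q_3 \equiv \pm 3 \pmod 2 \equiv 1$. So the substance of the lemma is the contrapositive for $n \geq 2$, which I would prove by contradiction.

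Suppose $B_{F(2n+1),F(2n-1)}$ admits a symplectic embedding for some $n \geq 2$. By \cite{es} the embedding is part of a Markov triple embedding, so there is a Markov triple $(p_1,p_2,p_3)$ with $p_i = F(2n+1)$ for some $i$, and $q_i \equiv \pm F(2n-1) \pmod{F(2n+1)}$ (using $B_{p,q}\cong B_{p,p-q}$). By the formula $q_i \equiv \pm 3 p_j/p_k \pmod{F(2n+1)}$ this forces $p_j/p_k \equiv \pm F(2n-1)/3 \pmod{F(2n+1)}$.

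The heart of the argument is the claim that $p_j/p_k \equiv \pm F(2n-1) \pmod{F(2n+1)}$ for \emph{every} Markov triple containing $F(2n+1)$. I would establish this in two steps. First, a short calculation: a Vieta mutation fixing $F(2n+1)$ sends $p_3 \mapsto p_3' = 3p_1p_2 - p_3 \equiv -p_3 \pmod{p_1}$, so the ratio $p_j/p_k$ at the $F(2n+1)$ slot simply changes sign. Second, I would invoke the Fibonacci case of Frobenius's Markov uniqueness conjecture (a known theorem, for instance due to Baragar) to conclude that $(1, F(2n-1), F(2n+1))$ is the unique Markov triple in which $F(2n+1)$ is maximal, and hence that every Markov triple containing $F(2n+1)$ reduces to this seed by iteratively mutating out the largest non-$F(2n+1)$ entry. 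These mutations all fix $F(2n+1)$, so $\pm p_j/p_k \pmod{F(2n+1)}$ takes the constant value $\pm F(2n-1)$, its value in the seed.

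Granted the claim, the recursion $3F(2n-1) = F(2n+1) + F(2n-3)$ gives $q_i \equiv \pm F(2n-3) \pmod{F(2n+1)}$, so an embedding of $B_{F(2n+1),F(2n-1)}$ would require $F(2n-1) \equiv \pm F(2n-3) \pmod{F(2n+1)}$. For $n\geq 2$ both $F(2n-3)$ and $F(2n-1)$ lie strictly between $0$ and $F(2n+1)/2$ (since $F(2n+1) = 3F(2n-1) - F(2n-3) > 2F(2n-1)$), forcing $F(2n-3)=F(2n-1)$, which is false. The main obstacle is the Fibonacci Frobenius uniqueness input; without it one would need to rule out directly that ``exotic'' square roots of $-1$ modulo $F(2n+1)$ appear as ratios $p_j/p_k$ from some Markov triple, which is awkward because composite Fibonacci numbers such as $F(15)=2\cdot 5\cdot 61$ have multiple nonconjugate square roots of $-1$.
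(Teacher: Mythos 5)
Your reduction to a statement about Markov triples via \cite{es} is the right starting point, and the endgame arithmetic (using $3F(2n-1)=F(2n+1)+F(2n-3)$ and the size estimates $0<F(2n-3)<F(2n-1)<F(2n+1)/2$) is fine \emph{granted} your central claim. But the claim itself rests on an input that is not available: the unicity of the Markov triple with maximum $F(2n+1)$ is not a known theorem for all $n$. The results of Baragar (and Button, Schmutz, Zhang) establish the Frobenius unicity conjecture only for Markov numbers $c$ such that one of $c$, $3c-2$, $3c+2$ is a prime power, or twice or four times one; odd-index Fibonacci numbers need not satisfy any of these hypotheses (e.g.\ $F(15)=610=2\cdot5\cdot61$, and there is no identity guaranteeing the required form for $3F(2n+1)\pm2$ in general), and no special case of the conjecture covering the whole Fibonacci branch is in the literature. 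Since your mutation argument only shows that the residue $\pm p_j/p_k \pmod{F(2n+1)}$ is constant on each connected component of the set of triples containing $F(2n+1)$ under mutations fixing $F(2n+1)$, and these components are indexed exactly by the triples in which $F(2n+1)$ is maximal, the claim is genuinely equivalent to the unicity statement you are citing. So as written the proof has a gap at its main step, which you yourself flag as ``the main obstacle''.

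The paper avoids this entirely, and the trick is worth internalising: you do not need to identify \emph{which} square root of $-1$ arises, only that one does. For any Markov triple $(a,b,p)$ with maximum $p$, the Markov equation gives $a^2+b^2\equiv0\pmod p$, so the ``characteristic number'' $u$ with $b\equiv\pm ua\pmod p$ satisfies $u^2\equiv-1\pmod p$, and by \cite{es} the symplectically embeddable ball is $B_{p,q}$ with $q\equiv\pm3u$. If $B_{F(2n+1),F(2n-1)}$ embedded symplectically, then since $F(2n-1)^2\equiv-1\pmod{F(2n+1)}$ (from the triple $(1,F(2n-1),F(2n+1))$), squaring $F(2n-1)\equiv\pm3u$ gives $-1\equiv9u^2\equiv-9\pmod{F(2n+1)}$, forcing $F(2n+1)\mid8$ and hence $n=1$. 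This one-line congruence replaces your appeal to unicity and disposes precisely of the ``exotic square roots of $-1$'' for composite $F(2n+1)$ that you were worried about. If you want to salvage your write-up, replace the unicity step with this squaring argument; otherwise the proposal cannot be completed from known results.
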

\begin{proof} 
From \cite[Theorem 4.15]{es} we have that $B_{p,q}$ embeds symplectically in $\CP^2$ if and only if $p$ is the maximum of a Markov triple $(a,b,p)$, and $q=\pm3b/a\pmod{p}$.  
Then in fact $q=\pm3u$, where $u$ is the characteristic number of the Markov triple.

For $n>1$, the odd Fibonacci number $F(2n+1)$ is the maximum of the Markov triple $(1,F(2n-1),F(2n+1))$, from which it follows that $B_{F(2n+1),F(2n-3)}$ embeds symplectically.  Also note that the characteristic number of this Markov triple is $F(2n-1)$, and $F(2n-1)^2\equiv-1\pmod {F(2n+1)}$.

Then $B_{F(2n+1),F(2n-1)}$ embeds symplectically if and only if $F(2n+1)$ is the maximum of another Markov triple $(a,b,F(2n+1))$, and $F(2n-1)=\pm3u$, where $u$ is the characteristic number of the triple $(a,b,F(2n+1))$.  This would imply that
$$-1\equiv F(2n-1)^2\equiv9u^2\equiv -9\pmod {F(2n+1)}.$$
The only odd Fibonacci numbers which divide 8 are $F(1)=1$ and $F(3)=2$, so we conclude that $n=1$.

Finally, $F(3)=2$ is the maximum of the Markov triple $(1,1,2)$ and $B_{F(3),F(1)}=B_{2,1}$ does embed sympectically.
\end{proof}

\begin{proof}[Proof of \Cref{thm:CP2}.]
As noted in the proof of \Cref{lem:nonsymp}, the Markov triple $(1,1,2)$ gives rise to an embedding of $B_{F(3),F(1)}=B_{2,1}$ in $\CP^2$.  Suppose now that $n>1$.
Induction using \eqref{eq:Fib} yields the Hirzebruch-Jung continued fraction expansion
$$\frac{F(2n+1)}{F(2n-1)}=[3^{n-1},2].$$
Now using \cite[Lemma 3.1]{balls} we have
$$\frac{F(2n+1)^2}{F(2n+1)F(2n-1)-1}=[3^{n-1},5,3^{n-2},2].$$

These continued fractions may be used to describe the surface $\Delta_{F(2n+1),F(2n-1)}$, as described in \cite{balls}.

The proof that $\Delta_{F(2n+1),F(2n-1)}$ is a sublevel surface of $P_{+}$ is a minor modification of the proof of \cite[Theorem 5]{balls}.  We  refer the reader to that source for details.  

Consider the first diagram shown in Figure \ref{fig:CP2}.  This represents a surface $\Sigma$ bounded by the unknot, which we claim is $P_+$.  Note first that the band move corresponding to the blue band labelled $0$ converts the diagram to one of $\Delta_{F(2n+1),F(2n-1)}$, which is the slice disk described by Casson and Harer \cite{ch} for the two-bridge knot $S(F(2n+1)^2,F(2n+1)F(2n-1)-1)$.
This shows that $\Delta_{F(2n+1),F(2n-1)}$ is a sublevel surface of the surface $\Sigma$.  It remains to see that $\Sigma$ is the unknotted M\"{o}bius band $P_{+}$ whose double branched cover is $\CP^2$ minus a 4-ball.

\begin{figure}[htbp]
\centering
\includegraphics[width=\textwidth]{./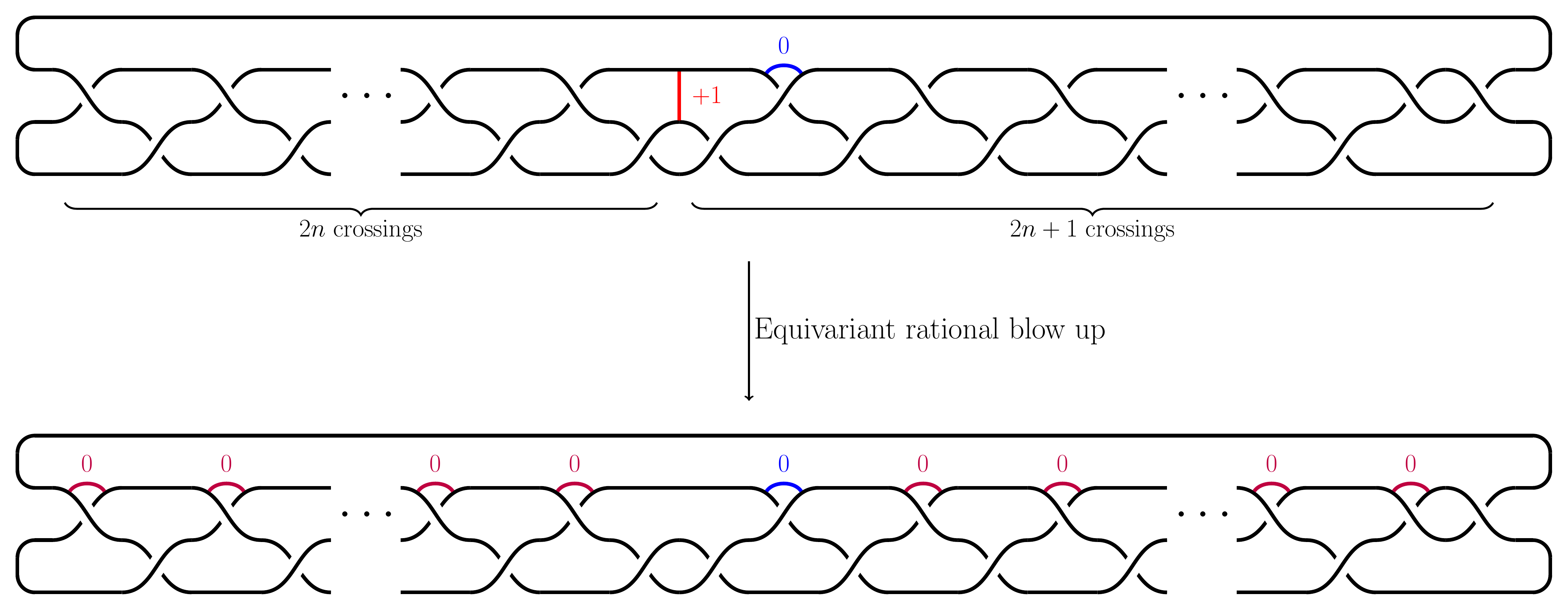}
\begin{narrow}{0.3in}{0.3in}
\caption
{{\bf The slice disk $\Delta_{F(2n+1),F(2n-1)}$ as a sublevel surface of $P_+$, and the resulting equivariant rational blow up.}  Numbers beside bands give the signed count of half-twists or crossings.}
\label{fig:CP2}
\end{narrow}
\end{figure}

Figure \ref{fig:CP2slides} shows a sequence of isotopies and band slides converting $\Sigma$ to $P_+$ in the first case of interest which is $n=2$.  
Taking double branched covers we see that $B_{5,2}$ admits a smooth embedding in $\CP^2$.  The proof for $n>2$ follows by an induction argument involving band slides similar to those in Figure \ref{fig:CP2slides}.  The inductive step is shown in Figure \ref{fig:CP2induction}.

\begin{figure}[htbp]
\centering
\includegraphics[width=\textwidth]{./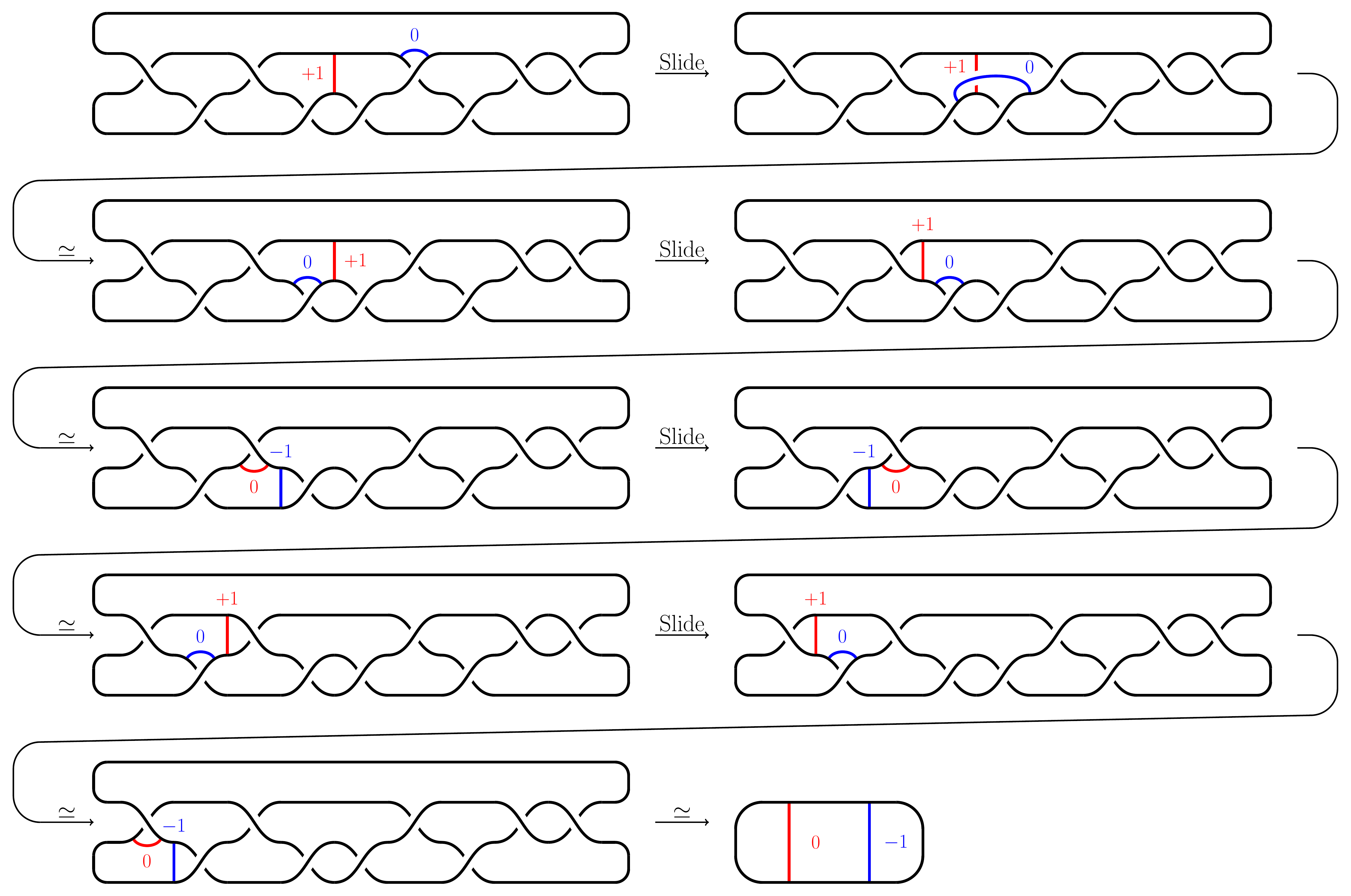}
\begin{narrow}{0.3in}{0.3in}
\caption
{{\bf The slice disk $\Delta_{5,2}$ is a sublevel surface of $P_+$.}}
\label{fig:CP2slides}
\end{narrow}
\end{figure}

Recall that an embedding of $B_{p,q}$ in a 4-manifold $Z$ is called simple if the resulting rational blow up of $Z$ is obtainable by a sequence of ordinary blow ups.  The proof that the embeddings described above are simple follows as in \cite[Proposition 5.1]{balls}; we again refer the reader to \cite{balls} for more details on equivariant rational blow up, and to \Cref{sec:obst} for a description of rational blow up.   We describe here a slightly shorter version of the proof at the level of double branched covers.  The second diagram in Figure \ref{fig:CP2} represents the surface in the 4-ball pushed in from the black surface of the two-bridge diagram shown, using a chessboard colouring in which the unbounded region is white.  The rational blow up of $\CP^2$, minus a 4-ball, is the double cover $X$ of the 4-ball branched along this black surface, which in turn is the plumbing of disk bundles over $S^2$ corresponding to the linear graph with weights
$$(-3)^{n-1},-2,-1,(-3)^{n-2},-2,$$
where $(-3)^m$ denotes $-3$ repeated $m$ times.
A sequence of $-1$ blow downs reduces this to the linear plumbing with weights $-3$ and $0$, which is diffeomorphic to $ \CP^2 \#\overline{\CP^2}$, again minus a ball.  It follows that
$$X\cong  \CP^2 \#(2n-1)\overline{\CP^2}.$$
Together with \Cref{lem:nonsymp}, this completes the proof of \Cref{thm:CP2}.\end{proof}

\begin{figure}[htbp]
\centering
\includegraphics[width=\textwidth]{./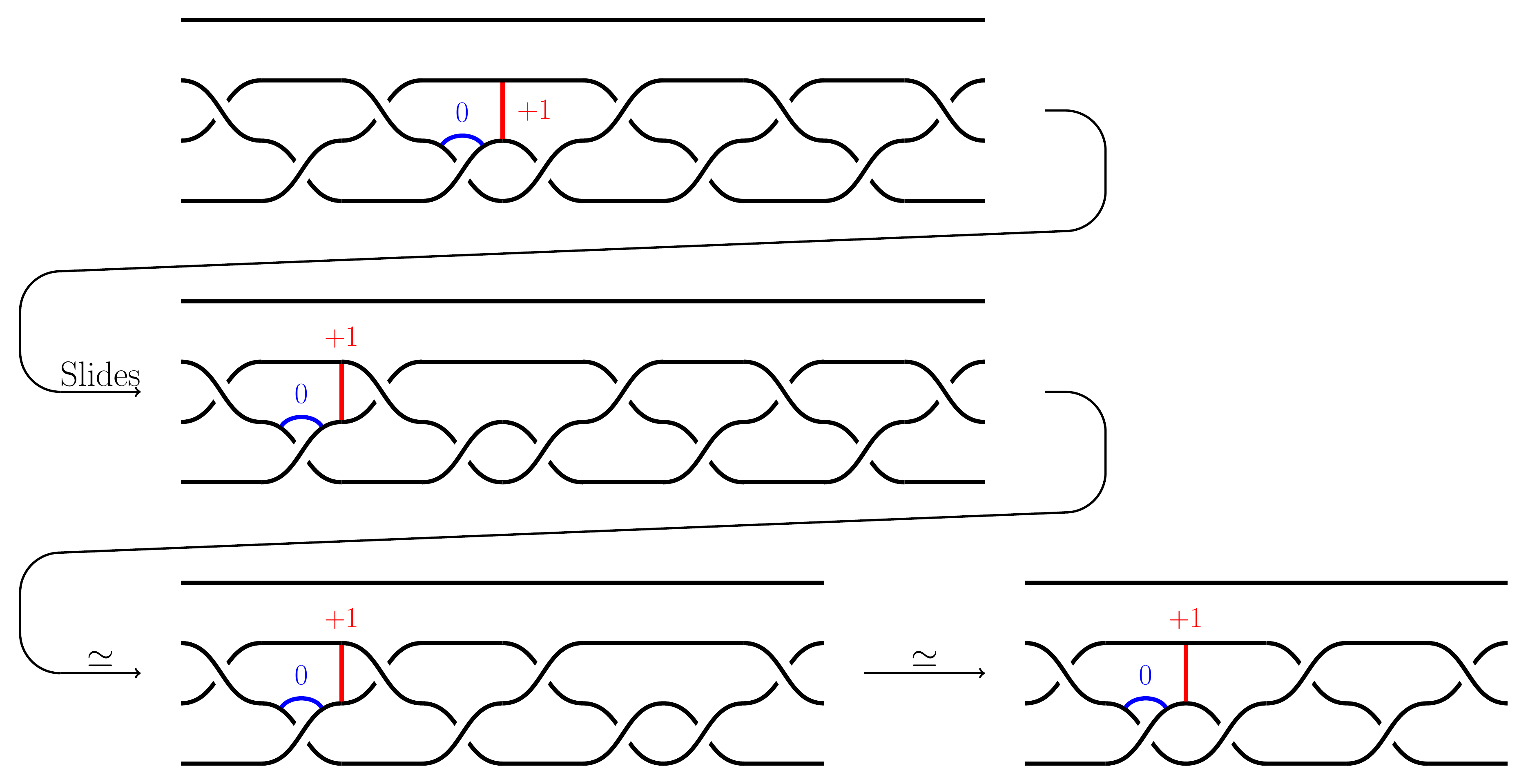}
\begin{narrow}{0.3in}{0.3in}
\caption
{{\bf The inductive step.} The band slides are similar to those shown in Figure \ref{fig:CP2slides}.  This shows how to transform the first diagram in Figure \ref{fig:CP2} with $n=k$, to the same diagram with $n=k-1$.}
\label{fig:CP2induction}
\end{narrow}
\end{figure}


\section{An obstruction from Donaldson's diagonalisation theorem}
\label{sec:obst}

In this section, we derive a lattice embedding obstruction to smoothly embedding a rational homology ball bounded by a lens space, or a disjoint union of such, in $\CP^2$.  We begin by setting some conventions and terminology.

All homology and cohomology groups in this section have integer coefficients.  Recall that if $X$ is a smooth 4-manifold, possibly with boundary, then its intersection lattice $\Lambda_X$ consists of the free abelian group $H_2(X)/\tors$ together with the symmetric bilinear intersection pairing.  The term lens space will be used here to refer to $L(p,q)$ with $p>q\ge1$; in particular not $S^3$ or $S^2\times S^1$.  Given integers $a_1,\dots,a_k$, the linear lattice $\Lambda(a_1,\dots,a_k)$ is defined to be the free abelian group with generators $v_1,\dots,v_n$, and with symmetric bilinear pairing given by
\begin{equation}
v_i\cdot v_j=
\begin{cases} a_i & \mbox{ if $i=j$;} \\ -1 & \mbox{ if $|i-j|=1$;}\\0 & \mbox{ if $|i-j|>1$.}\end{cases}
\label{eq:linlat}
\end{equation}
As this is the lattice associated to a weighted linear graph, we often refer to the generators $v_1,\dots,v_k$ as vertices.
Recall that a lens space $L(p,q)$ is the boundary of a plumbing $C$ of disk bundles over spheres determined by the weighted linear graph with weights $a_1,\dots,a_k\ge2$ where 
$$\frac{p}{p-q}=[a_1,a_2,\ldots,a_k]
:=a_1-\frac{1}{a_2-\raisebox{-3mm}{$\ddots$
\raisebox{-2mm}{${-\frac{1}{\displaystyle{a_k}}}$}}}.$$
The intersection lattice of $C$ is then $\Lambda(a_1,\dots,a_k)$.

Let $B$ be a rational homology ball with lens space boundary.  Given  an embedding $B\hookrightarrow \CP^2$, we let $M$ be the complement $\CP^2\setminus B$ and ``rationally blow up'' to obtain the closed positive-definite manifold $M\cup C$, where $C$ is the positive-definite plumbed manifold bounded by $\partial B$.  Donaldson's diagonalisation theorem then implies the existence of a lattice embedding
\begin{equation}\Lambda_M\oplus\Lambda_C\hookrightarrow \zz^m,
\label{eq:Don1}
\end{equation}
where $\Lambda_M$ and $\Lambda_C$ are the intersection lattices of $M$ and $C$ respectively, and $m$ is the sum of their ranks.

The reader familiar with the use of such lattice obstructions will note that since $M$ is a submanifold of $\CP^2$, and since $Y=\partial B$ bounds a rational ball, each of $\Lambda_M$ and $\Lambda_C$ admit finite-index embeddings in diagonal unimodular lattices, so that an embedding as in \eqref{eq:Don1} must in fact exist, with the first factor embedding in $\zz$ and the second in the orthogonal $\zz^{m-1}$.  We will show that simple topological considerations place further restrictions on the lattice embedding in \eqref{eq:Don1}, giving rise to a useful obstruction, which also extends to the case of an embedding of a disjoint union of rational balls.

\begin{lemma}
\label{lem:Zhom}
Let $B_i$ be rational homology balls bounded by lens spaces for $i=1,\dots,n$, and suppose that the disjoint union $\bigsqcup_{i}B_i$ embeds smoothly in $\CP^2$.  Then the complement $M=\CP^2\setminus\bigsqcup_{i}B_i$ has $H_1(M;\zz)=0$ and $H_2(M;\zz)\cong\zz$.
\end{lemma}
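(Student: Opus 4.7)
The plan is to apply the long exact sequence of the pair $(\CP^2, M)$ together with excision and Poincar\'{e}--Lefschetz duality. Setting $B = \bigsqcup_i B_i$, excision gives $H_*(\CP^2, M) \cong H_*(B, \partial B) = \bigoplus_i H_*(B_i, Y_i)$. Since each $B_i$ is a rational homology 4-ball bounded by a lens space, the half-lives-half-dies principle forces $H_1(B_i) \cong \zz/p_i$ with $p_i^2 = |H_1(Y_i)|$, and $H_k(B_i) = 0$ for $k \geq 2$. Combining Poincar\'{e}--Lefschetz duality $H_k(B_i, Y_i) \cong H^{4-k}(B_i)$ with the universal coefficient theorem gives $H_2(B_i, Y_i) \cong \zz/p_i$ and $H_k(B_i, Y_i) = 0$ for $k \in \{0, 1, 3\}$. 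The long exact sequence of $(\CP^2, M)$ thus reduces to
\[
0 \to H_2(M) \to \zz \xrightarrow{f} \bigoplus_i \zz/p_i \to H_1(M) \to 0,
\]
in which $f$ corresponds, under Poincar\'{e} duality on $\CP^2$, to restriction of the hyperplane class $h \in H^2(\CP^2)$ to $H^2(B)$.

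Next I would read off $H_2(M) \cong \zz$ directly: the sequence shows $H_2(M)$ injects into $\zz$, hence is $0$ or $\zz$; the rational Mayer--Vietoris for $\CP^2 = M \cup B$ gives $b_2(M) = 1$, so $H_2(M) \cong \zz$.

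It remains to prove $H_1(M) = 0$, equivalently surjectivity of $f$. Geometrically, the $i$-th component of $f([\CP^1])$ is the class $[\CP^1 \cap B_i] \in H_2(B_i, Y_i)$, and under the injection $H_2(B_i, Y_i) \hookrightarrow H_1(Y_i)$ from the long exact sequence of $(B_i, Y_i)$ this maps to the class of the 1-cycle $\CP^1 \cap Y_i$ inside the order-$p_i$ subgroup of $\zz/p_i^2$. The hard part will be showing that each such component is a generator of $H_2(B_i, Y_i) \cong \zz/p_i$ and that together they generate $\bigoplus_i \zz/p_i$ --- in particular this forces the $p_i$ to be pairwise coprime, which is itself a nontrivial constraint on which collections of rational balls may be embedded disjointly. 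I would approach this by exploiting the self-intersection $[\CP^1]^2 = 1$ in $\CP^2$ together with the compatibility of the intersection pairings on $M$ and $B$ with the linking form on $Y$, using Mayer--Vietoris to translate the single identity $[\CP^1]^2=1$ into the required joint generation statement on the direct sum.
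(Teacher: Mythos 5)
Your setup (excision plus the long exact sequence of $(\CP^2,M)$) is fine in outline, but there are two genuine problems. First, the claim that ``half-lives-half-dies'' forces $H_1(B_i)\cong\zz/p_i$ and $H_k(B_i)=0$ for $k\ge2$ is false as a general statement about rational homology balls bounded by lens spaces: for example, the connected sum of $B_{p,q}$ with a spun lens space is a rational ball with the same lens space boundary but with extra torsion in both $H_1$ and $H_2$, so $H_2(B_i,Y_i)$ need not be $\zz/p_i$. These facts are true in your situation, but only because of the embedding: since $H_2$ of a lens space vanishes, Mayer--Vietoris for $\CP^2=M\cup B$ shows $H_2(M)\oplus H_2(B)$ injects into $H_2(\CP^2)\cong\zz$, which kills the torsion group $H_2(B_i)$, and only then does the long exact sequence of $(B_i,Y_i)$ give $H_1(B_i)\cong\zz/p_i$. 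This is exactly how the paper gets these groups (inductively), and your proof needs this step before the four-term sequence you write down is justified.

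Second, and more seriously, the crux of the lemma --- $H_1(M)=0$, i.e.\ surjectivity of your map $f:\zz\to\bigoplus_i\zz/p_i$ --- is only announced, not proved. You correctly observe that surjectivity would force the $p_i$ to be pairwise coprime and that the generation statement is ``the hard part,'' but the proposed route via $[\CP^1]^2=1$ and compatibility with linking forms is left as a plan; turning it into a proof would require discriminant/linking-form computations that you do not carry out, and it is not clear the single identity $[\CP^1]^2=1$ suffices without them. The paper avoids all of this by induction, removing one ball at a time: if $M'=\CP^2\setminus\bigsqcup_{i<n}B_i$ already has $H_1=0$ and $H_2\cong\zz$, then Mayer--Vietoris for $M'=M\cup_Y B_n$ gives a surjection $\zz/p_n^2\cong H_1(Y)\twoheadrightarrow H_1(M)\oplus\zz/p_n$, and elementary group theory (a quotient of a cyclic group is cyclic, and cyclic summands of $H_1(M)$ would have order dividing $p_n$) forces $H_1(M)=0$, with no need for coprimality of the $p_i$ or any geometric identification of the maps. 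I recommend either adopting that inductive argument or supplying a complete proof of the surjectivity of $f$; as written, your argument establishes $H_2(M)\cong\zz$ (modulo the first point) but not $H_1(M)=0$.
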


\begin{proof} We use the Mayer-Vietoris sequence and induction.  The base case is $n=0$ and $M=\CP^2$.

Now suppose $M'=\CP^2\setminus\bigsqcup_{i=1}^{n-1}B_i$ has $H_1(M';\zz)=0$ and $H_2(M';\zz)\cong\zz$.  Then $$M'=M\cup_Y B_n,$$
where $Y=L(p_n^2,q_n)$ has $H_1(Y;\zz)\cong\zz/p_n^2\zz$.  We have $H_2(B_n;\zz)=0$, since it is a torsion subgroup of $H_2(M';\zz)\cong\zz$; then from the long exact sequence of the pair $(B_n,Y)$, we have $H_1(B_n;\zz)\cong\zz/p_n\zz$. 
The Mayer-Vietoris sequence, with integer coefficients, shows that $H_2(M)$ is a finite-index subgroup of $\zz$, hence $H_2(M)\cong\zz$.  The same sequence shows that there is a surjection from $\zz/p_n^2\zz$ to $H_1(M)\oplus \zz/p_n\zz$, from which it follows that the latter direct sum is finite cyclic and also that cyclic summands of $H_1(M)$ have orders dividing $p_n$.  We conclude that $H_1(M)$ must be trivial.
\end{proof}

We recall the notion of rational blow up, and modify and generalise it for our convenience.  If a disjoint union $\bigsqcup_{i}B_i$ embeds smoothly in some 4-manifold $Z$, where each $B_i$ is a rational ball bounded by a lens space $L(p_i,q_i)$, then we may excise each $B_i$ and replace it by the positive-definite plumbed manifold $C_i$ bounded by $L(p_i,p_i-q_i)$ to obtain a new manifold 
$$X=M\cup C,$$
called the positive rational blow up of $Z$.  Here $M$ is the complement of $\bigsqcup_{i=1}^n B_i$ in $Z$, and $C$ is the disjoint union $\bigsqcup_{i=1}^n C_i$ of plumbed manifolds.  We assume that all weights in each plumbing $C_i$ are at least 2.

\begin{proposition}
\label{prop:Don}
Let $B_i$ be rational homology balls bounded by lens spaces 
for $i=1,\dots,n$, and suppose that the disjoint union $\bigsqcup_{i}B_i$ embeds smoothly in $\CP^2$.  Let $X=M\cup C$ be the resulting positive rational blow up of $\CP^2$.
Then there exists a finite-index lattice embedding
\begin{equation}
\label{eq:Don2}
\Lambda_M\oplus\Lambda_C\hookrightarrow\zz^m,
\end{equation}
such that each unit vector $e\in\zz^m$ has nonzero pairing with each of $\Lambda_M$ and $\Lambda_C$.  Moreover the image of the generator of $\Lambda_M$ is a primitive vector in $\zz^m$.
\end{proposition}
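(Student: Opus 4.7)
The plan is to apply Donaldson's diagonalisation theorem to the closed four-manifold $X = M \cup C$, after first verifying its hypotheses. The positive-definiteness of the intersection form follows from the orthogonal splitting $H_2(X;\qq) \cong H_2(M;\qq) \oplus H_2(C;\qq)$: classes from $M$ and $C$ admit disjoint representatives, the $H_2(M;\qq)$-summand is generated by the hyperplane class $h = [\CP^1]$ of self-intersection $+1$ (using \Cref{lem:Zhom}), and each $C_i$ is positive-definite by construction, since all of its plumbing weights are at least $2$. For the vanishing of $b_1(X)$, I would apply Mayer-Vietoris to $X = M \cup_Y C$, using $H_1(M;\zz) = 0$ from \Cref{lem:Zhom}, $H_1(C_i;\zz) = 0$ for each simply-connected plumbing, and injectivity of $H_0(Y) \to H_0(M) \oplus H_0(C)$; the sequence then forces $H_1(X;\zz) = 0$.

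Donaldson's theorem then supplies a lattice isometry $\Lambda_X \cong \zz^m$ with $m = b_2(X)$. The Mayer-Vietoris sequence in degree $2$ reduces, using $H_2(Y) = 0$ and $H_1(M) \oplus H_1(C) = 0$, to the short exact sequence
\[
0 \to H_2(M) \oplus H_2(C) \to H_2(X) \to H_1(Y) \to 0,
\]
so that $H_2(M) \oplus H_2(C)$ sits as a finite-index subgroup of $H_2(X)$ with cokernel $H_1(Y) \cong \bigoplus_i \zz/p_i^2\zz$. Since the intersection pairing restricts as expected and classes from $M$ and $C$ are mutually orthogonal in $\Lambda_X$, composing with the Donaldson identification yields the asserted finite-index lattice embedding $\Lambda_M \oplus \Lambda_C \hookrightarrow \zz^m$.

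Finally I would verify the two additional conditions. Primitivity of the image of the generator of $\Lambda_M$ is immediate: $h \in \Lambda_M \subset \zz^m$ has $h \cdot h = 1$, and any factorisation $h = k w$ with $w \in \zz^m$ forces $k^2(w \cdot w) = 1$ and hence $k = \pm 1$. For the pairing condition, the starting observation is that because $\Lambda_M \oplus \Lambda_C$ is finite-index in the unimodular lattice $\zz^m$, every unit vector $e \in \zz^m$ must have nonzero pairing with the combined sublattice. I expect the main technical work to lie in refining this to control the pairing with each summand separately, exploiting that $\Lambda_M$ is generated by the single norm-$1$ class $h$ (so $\Lambda_C$ embeds with finite index in $h^\perp \cong \zz^{m-1}$) and that each plumbing lattice $\Lambda_{C_i}$, having all weights $\ge 2$, contains no norm-$1$ vectors. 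These structural facts, combined with the orthogonal finite-index splitting, constrain which unit vectors of $\zz^m$ can occur and supply the precise pairing information needed for the obstruction used in \Cref{thm:CP2disjoint}.
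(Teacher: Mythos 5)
Your use of Donaldson's theorem and the Mayer--Vietoris computation giving the finite-index embedding $\Lambda_M\oplus\Lambda_C\hookrightarrow\zz^m$ is fine, but the two extra conditions --- which are the whole content of the proposition --- rest on a false structural claim and are otherwise left open. You assert that $\Lambda_M$ is generated by the hyperplane class $h$ with $h\cdot h=1$. That is not true: from Mayer--Vietoris for $\CP^2=M\cup\bigsqcup_i B_i$ (using $H_2(B_i)=0$, $H_1(B_i)\cong\zz/p_i$, $H_1(Y)\cong\bigoplus_i\zz/p_i^2$) the map $H_2(M)\to H_2(\CP^2)$ has image of index $\prod_i p_i$, so the generator of $\Lambda_M$ is $\prod_i p_i$ times the hyperplane class and has square $\prod_i p_i^2=|H_1(Y)|$. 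Hence your ``immediate'' primitivity argument collapses, and your structural picture is in fact inconsistent with the statement being proved: if the generator of $\Lambda_M$ were a unit vector $h$, then $\zz^m=\zz h\oplus h^{\perp}$ and every unit vector in $h^{\perp}$ would pair trivially with $\Lambda_M$, contradicting the pairing condition whenever $m\ge2$.

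For the pairing condition itself you only record the (correct) observation that a unit vector cannot be orthogonal to a finite-index sublattice, and defer the real point --- nonzero pairing with each summand separately. This cannot be recovered by lattice-theoretic refinement alone: as the paper notes just before the proposition, block-diagonal finite-index embeddings $\Lambda_M\hookrightarrow\zz$, $\Lambda_C\hookrightarrow\zz^{m-1}$ always exist abstractly, and for those the condition fails, so the extra input must be geometric. The missing idea is to decompose an arbitrary unit vector $e\in H_2(X)$ under restriction to the two pieces, $e=e_M+e_C$ with $e_M\in H_2(M,Y)\cong\Lambda_M^{\,*}$ and $e_C\in H_2(C,Y)\cong\Lambda_C^{\,*}$ (torsion-freeness coming from $H_1(M)=0$, i.e.\ \Cref{lem:Zhom}). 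Since neither $\Lambda_M$ (rank one with generator of square $|H_1(Y)|>1$) nor $\Lambda_C$ (all weights at least $2$) contains unit vectors, both $e_M$ and $e_C$ are nonzero, and positive definiteness together with the dual-lattice description shows that a nonzero element of $H_2(M,Y)$ (resp.\ $H_2(C,Y)$) pairs nontrivially with $H_2(M)$ (resp.\ $H_2(C)$); this is exactly the asserted condition. The same decomposition, applied to $w$ in a putative factorisation $v=kw$ of the image of the $\Lambda_M$-generator, forces $w_C=0$ and hence gives primitivity.
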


\begin{remark}
Let $A$ be the matrix of the embedding in \eqref{eq:Don2} in terms of a basis $v_1,\dots,v_m$ for $\Lambda_X$, where $v_1\in\Lambda_M$ and $v_2,\dots,v_m\in\Lambda_C$, and an orthonormal basis for $\zz^m$.  Then the proposition states that each row of $A$ has at least two nonzero entries including one in the first column, and also that the  entries of the first column of $A$, which are all nonzero, have no common divisor.

The known embeddings mentioned earlier in this section each give rise to a block diagonal matrix $A$ which does not satisfy the condition in the proposition.
\end{remark}
\begin{proof}[Proof of \Cref{prop:Don}.]
Let $Y$ denote the union of lens spaces which is the common boundary of $M$ and $C$.  Let $e$ be a unit vector in $\Lambda_X$.  We may write 
$$e=e_M+e_C,$$ where
$e_M\in H_2(M,Y)$ and $e_C\in H_2(C,Y)$.  There are no unit vectors in $\Lambda_M$, which is a rank one lattice whose generator squared is the order of the first homology of $Y$.  There are also no unit vectors in $\Lambda_C$ since we assumed all weights in each plumbing are at least 2.  It follows that  $e_M$ and $e_C$ are both nonzero.

Since $H_1(M)=0$ by \Cref{lem:Zhom}, all homology groups of $M$ are in fact torsion-free by standard arguments using universal coefficients, Poincar\'{e}-Lefschetz duality, and the long exact sequence of the pair.  It follows that the second homology group $H_2(M)$ is the underlying group of the lattice $\Lambda_M$, and the relative homology group $H_2(M,Y)$ is the underlying group of the dual lattice $\Lambda_M\!^*$ via the universal coefficient theorem.  Then since $\Lambda_M$ is positive definite, we see that an element of $H_2(M,Y)$ is nonzero if and only if it has nonzero intersection with some element in $H_2(M)$.  Thus in particular $e_M$ and also $e$ has nonzero intersection with some element of $H_2(M)$.
The same argument applies to $\Lambda_C$, so that $e_C$ and also $e$ has nonzero pairing with some element of $H_2(C)$ which is the underlying group of $\Lambda_C$.

Finally let $v$ denote the image in $H_2(X)$ of the generator of $\Lambda_M$, and suppose that $v=kw$ for some $k\in\nn$ and $w\in H_2(X)$.  As above we write $w=w_M+w_C$ and we conclude that $w_C=0$ since it has zero pairing with all of $\Lambda_C$.  This implies $w=w_M\in\Lambda_M$, but then $k=1$ since $v$ is the generator.
\end{proof}

In what follows we study lattice embeddings $\Lambda\hookrightarrow\zz^m$ up to lattice automorphisms of $\zz^m$, or in other words, up to reordering of the orthonormal basis $e_1,\dots,e_m$, and/or changing signs of some orthonormal basis elements.  Embeddings of linear lattices all of whose weights are 2 or 3 are very restricted, since up to $\Aut(\zz^m)$, vectors $v\in\zz^m$ with $v\cdot v=2$ or $v\cdot v=3$ take the form $v=e_1+e_2$ or $v=e_1+e_2+e_3$.

\begin{example}
\label{ex:B31}
The rational ball $B_{3,1}$ does not embed smoothly in $\CP^2$.
\end{example}
\begin{proof}
The boundary of $B_{3,1}$ is the lens space $L(9,2)$, which also bounds the positive-definite plumbing $C$ with weights $[2,2,2,3]$.  Let $v_2,\dots,v_5$ be the generators of the linear lattice $\Lambda_C=\Lambda(2,2,2,3)$ as in \eqref{eq:linlat},
and let $v_1$ be the generator of the rank one lattice $\Lambda_M=\Lambda(9)$.  Let $e_1,\dots, e_5$ be an orthonormal basis for $\zz^5$.  There is, up to lattice automorphisms of $\zz^5$, a unique embedding
$$\Lambda_M\oplus\Lambda_C\hookrightarrow\zz^5;$$
this takes $v_1$ to $3e_1$, $v_i$ to $-e_{i}+e_{i+1}$ for $2\le i\le4$, and  $v_5$ to $e_2+e_3+e_4$.  This does not satisfy the conditions of \Cref{prop:Don}, since $e_i$ has zero pairing with $\Lambda_M$ for $i>1$ and $e_1$ has zero pairing with $\Lambda_C$.
\end{proof}

\begin{lemma}
\label{lem:Cemb}
Up to $\Aut(\zz^m)$, there are precisely two ways to embed the linear lattice $\Lambda(2,2,2)$ in $\zz^m$, where $m\ge4$.  The first has image in a $\zz^3$ sublattice of $\zz^m$, and its orthogonal complement in this sublattice is the zero sublattice.  The second has image in a $\zz^4$ sublattice, and its orthogonal complement in $\zz^4$ is spanned by a vector $w$ with $w\cdot w=4$.

Let $n>1$ and let $\Lambda$ denote the linear lattice $\Lambda(3^{n-1},2,2,3^{n-1},2)$, with rank $r=2n+1$.  Up to $\Aut(\zz^m)$, there are precisely three ways to embed $\Lambda$ in $\zz^m$, where $m\in\nn$ is sufficiently large.   The first has image in a $\zz^r$ sublattice, and its orthogonal complement in this sublattice is the zero sublattice.  The second has image in a $\zz^{r+1}$ sublattice, and its orthogonal complement in $\zz^{r+1}$ is spanned by a vector $w$ with $w\cdot w=F(2n+1)^2$.  The third has image in a $\zz^{4n}$ sublattice, and its orthogonal complement in $\zz^{4n}$ contains no unit vectors.
\end{lemma}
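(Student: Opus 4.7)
The plan is to embed the generators one at a time in the order of the linear chain, and at each step classify the possibilities up to $\Aut(\zz^m)$.  Each norm-$2$ vector in $\zz^m$ has the form $\pm e_a \pm e_b$ and each norm-$3$ vector has the form $\pm e_a \pm e_b \pm e_c$, so every generator is specified by its coefficients on a short list of basis elements.  The intersection-pairing constraints with the already-placed generators then leave only a small number of candidate coefficient patterns at each step, enumerable by direct case analysis.

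For $\Lambda(2,2,2)$, after normalising $v_1 = e_1 + e_2$ and $v_2 = -e_2 + e_3$ using $\Aut(\zz^m)$, writing $v_3 = \sum \alpha_i e_i$ gives the linear constraints $\alpha_1 + \alpha_2 = 0$ and $\alpha_3 - \alpha_2 = -1$ together with $\|v_3\|^2 = 2$.  A short case check on $\alpha_2 \in \{-1,0,1\}$ leaves exactly two surviving configurations: $v_3 = -e_1 + e_2$, with image in $\zz^3$ and trivial orthogonal complement; and $v_3 = -e_3 + e_4$, with image in $\zz^4$ and orthogonal complement spanned by $(1,-1,-1,-1)$ of norm $4$.

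For $\Lambda = \Lambda(3^{n-1}, 2, 2, 3^{n-1}, 2)$, I begin with the central pair normalised as $w_n = e_1 + e_2$ and $w_{n+1} = -e_2 + e_3$ and walk outward along the chain.  Analysing the first norm-$3$ neighbour $w_{n-1}$ via its pairing constraints with $w_n, w_{n+1}$ and its norm yields, up to $\Aut(\zz^m)$, exactly two options: case (A), $w_{n-1} = -e_1 + e_4 + e_5$, which introduces two new basis elements; and case (B), $w_{n-1} = -e_2 - e_3 + e_4$, which shares basis elements with both vectors of the middle pair and introduces only one new one.  Iterating this local analysis for $w_{n-2}, \ldots, w_1$ and then for $w_{n+2}, \ldots, w_{2n+1}$ produces a branching tree of candidate configurations.

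The three surviving global configurations should correspond to: (1) case (A) followed by a specific ``fold-back'' pattern on the right side of the chain that reuses the basis elements introduced by the left-side vertices, giving image in $\zz^{2n+1}$ with trivial orthogonal complement; (2) case (B), whose rigid propagation forces each subsequent norm-$3$ vertex to add exactly one new basis element, culminating in image in $\zz^{2n+2}$ with orthogonal complement spanned by some $w$ with $w \cdot w = F(2n+1)^2$; and (3) case (A) followed by the standard ``fresh basis'' choice at every step, giving image in $\zz^{4n}$.  The main obstacle is ruling out the remaining mixed patterns: many local choices that appear legitimate at a given step fail at some later step because the coefficient equations force non-integer solutions, and verifying this requires a careful induction on $n$.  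The Fibonacci recursion $F(2n+3) = 3F(2n+1) - F(2n-1)$ should appear naturally from the propagation of the coefficients of the spanning vector of the orthogonal complement in case (2), allowing the induction to identify its norm as $F(2n+1)^2$.
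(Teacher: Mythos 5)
Your overall strategy is the same as the paper's: normalise the adjacent weight-two pair using $\Aut(\zz^m)$, then enumerate the possible images of the neighbouring vertices by elementary case analysis on the coefficients, walking along the chain. Your treatment of $\Lambda(2,2,2)$ is complete and correct. But for the main case $\Lambda(3^{n-1},2,2,3^{n-1},2)$ the proposal stops after the very first branching step (your cases (A) and (B) for one weight-three neighbour) and then only asserts what the surviving global configurations ``should'' be, explicitly deferring ``the main obstacle'' --- ruling out all the mixed patterns by an induction on $n$ --- without carrying it out. That exhaustive propagation argument \emph{is} the content of the lemma: the paper does it by classifying the embeddings of the sublattice $\Lambda(3,2,2,3)$ (three up to symmetry), discarding one because it does not extend to $\Lambda(3,2,2,3,2)$ or $\Lambda(3,2,2,3,3)$, and then showing by induction that each of the remaining two extends (essentially uniquely) to $\Lambda(3^{n-1},2,2,3^{n-1})$, with the final weight-two vertex admitting exactly two placements in one branch. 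Likewise the identification $w\cdot w=F(2n+1)^2$ is only hoped for (``should appear naturally from the propagation of the coefficients''); the paper gets it immediately from the fact that $\Lambda$ has determinant $F(2n+1)^2$ and sits as a primitive finite-index sublattice of its $\zz^{r+1}$ span, so its rank-one orthogonal complement has the same determinant. You also never verify that the $\zz^{4n}$ embedding has no unit vectors in its orthogonal complement (the paper checks that every $e_1,\dots,e_{4n}$ appears in the image vectors), and this property is what gets used later.

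A further sign that the classification has not actually been run: your attribution of branches to outcomes does not match what the case analysis produces. In the true classification the $\zz^{r}$ and $\zz^{r+1}$ embeddings arise from the \emph{same} branch --- the one in which the weight-three neighbour reuses basis vectors of the middle pair (your case (B)) --- and differ only in where the final weight-two vertex at the end of the chain is sent ($e_{2n}-e_{2n+1}$ versus $-e_{2n+1}+e_{2n+2}$); whereas you ascribe the $\zz^{2n+1}$ embedding to case (A) with a ``fold-back'' and the $\zz^{2n+2}$ embedding to case (B). So the missing induction is not a routine verification of a correctly guessed picture: it is where the actual structure of the three embeddings (and the exclusion of everything else) gets established, and as written the proposal does not prove the lemma.
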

\begin{proof} For the first case, we can either map the vertices of $\Lambda(2,2,2)$ to $-e_1+e_2,-e_2+e_3,e_1+e_2$ or to $-e_1+e_2,-e_2+e_3,-e_3+e_4$.  It is straightforward to see there are no other possibilities.

In the second case we begin by embedding the two adjacent vertices of weight two.  Up to automorphism of $\zz^m$, these are mapped to $-e_1+e_2$ and $-e_2+e_3$.  By inspection, the linear lattice $\Lambda(3,2,2,3)$, which is a sublattice of $\Lambda$, admits three possible embeddings up to symmetry as follows:
\begin{align}
\notag-&e_2-e_3-e_4,-e_1+e_2,-e_2+e_3,e_1+e_2-e_4;\\
\label{eq:Cemb}-&e_2-e_3-e_4,-e_1+e_2,-e_2+e_3,-e_3+e_4+e_5;\\
\notag\mbox{ or }\quad&\,e_1+e_4+e_5,-e_1+e_2,-e_2+e_3,-e_3+e_6+e_7.\\
\notag\end{align}

The first of these does not extend to an embedding of $\Lambda(3,2,2,3,2)$ or $\Lambda(3,2,2,3,3)$ so we discard it.  By a simple induction argument, the second of these extends uniquely to an embedding of $\Lambda(3^{n-1},2,2,3^{n-1})$ as follows:
$$-e_{2n-2}-e_{2n-1}-e_{2n},\dots,-e_4-e_5-e_6,-e_2-e_3-e_4,-e_1+e_2,$$
$$-e_2+e_3,-e_3+e_4+e_5,-e_5+e_6+e_7,\dots,-e_{2n-1}+e_{2n}+e_{2n+1}.$$
This can be extended to an embedding of $\Lambda$ in precisely two ways: we may map the additional weight two vertex to $e_{2n}-e_{2n+1}$ or to $-e_{2n+1}+e_{2n+2}$.
The first choice results in an embedding in $\zz^r$.  The second choice results in an embedding in $\zz^{r+1}$.  The orthogonal complement in $\zz^{r+1}$ has rank one and so is generated by a vertex $w$.  We may compute $w$ and hence its square directly or use the fact that $\Lambda$ is a primitive sublattice of $\zz^{r+1}$ with determinant $F(2n+1)^2$, which is therefore also the determinant of its rank one orthogonal complement.

Finally another simple induction argument shows that the third embedding  in \eqref{eq:Cemb} extends uniquely to $\Lambda(3^{n-1},2,2,3^{n-1})$, and also extends uniquely up to symmetry to give the following embedding of $\Lambda$:
\begin{equation}
\label{eq:Cemb3}
\begin{aligned}
e_{4n-7}&+e_{4n-4}-e_{4n-3},\dots,e_5+e_8-e_9,e_1+e_4-e_5,-e_1+e_2,-e_2+e_3,\\
-e_3&+e_6+e_7,-e_7+e_{10}+e_{11},\dots,-e_{4n-5}+e_{4n-2}+e_{4n-1},-e_{4n-1}+e_{4n}.
\end{aligned}
\end{equation}
We see that each of $e_1,\dots,e_{4n}$ appears in \eqref{eq:Cemb3}, and therefore has nonzero pairing with the image of this embedding.
\end{proof}

\begin{proof}[Proof of \Cref{thm:CP2disjoint}.]
For the duration of this proof, we denote by $B_n$ the rational ball $B_{F(2n+1),F(2n-1)}$, and by $C_n$ the positive-definite plumbed manifold with the same boundary as $B_n$, for each $n\in\nn$.  For $n=1$, the boundary of the rational ball $B_1=B_{2,1}$ is $L(4,1)$, and the plumbing $C_1$ has weights $[2,2,2]$.  For $n>1$, $C_n$ is the plumbing with weights $[3^{n-1},2,2,3^{n-1},2]$, as may be seen using \cite[Lemma 3.1]{balls}.

Suppose first that $B_1\sqcup B_n$ embeds smoothly in $\CP^2$.   Let $r_1=3$ and $r_2$ denote the ranks of $\Lambda_{C_1}$ and $\Lambda_{C_n}$ respectively.
By \Cref{prop:Don}, there is a finite-index lattice embedding
$$\Lambda_M\oplus\Lambda_{C_1}\oplus\Lambda_{C_n}\hookrightarrow\zz^m,$$
where $m=r_1+r_2+1=r_2+4$.
By \Cref{lem:Cemb}, the restriction of this to $\Lambda_{C_1}$ is either contained in a $\zz^3$ or is contained in a $\zz^4$, spanned by $e_1,\dots,e_4$ say, with orthogonal complement spanned by a vector $w$ of self-pairing 4.  Since the image of the generator of $\Lambda_M$ is orthogonal to the image of $\Lambda_{C_1}$ and has nonzero pairing with every unit vector in $\zz^m$ by \Cref{prop:Don}, it must be the second possibility.  The image of $\Lambda_{C_2}$ lies in the orthogonal complement to that of $\Lambda_{C_1}$.  If it is contained in the span of $e_5,\dots,e_m$ then this is a finite-index embedding in $\zz^{r_2}$ which again contradicts the fact that the image of the generator of $\Lambda_M$  has nonzero pairing with every unit vector.  Thus at least one vertex of $\Lambda_{C_2}$ contains a nonzero multiple of $w$.  This vertex then has self-pairing greater than that of $w$, contradicting the fact that the vertices of $\Lambda_{C_2}$ all have self-pairing 2 or 3.

We next suppose that $B_k\sqcup B_n$ embeds smoothly in $\CP^2$ with $n\ge k>1$.   Let $r_1=2k+1$ and $r_2=2n+1$ denote the ranks of $\Lambda_{C_k}$ and $\Lambda_{C_n}$ respectively.
By \Cref{prop:Don}, there is a finite-index lattice embedding
$$\Lambda_M\oplus\Lambda_{C_k}\oplus\Lambda_{C_n}\hookrightarrow\zz^m,$$
where $m=r_1+r_2+1=2k+2n+3$.

Arguing as in the previous case, we see that the restriction of this embedding to $\Lambda_{C_k}$ (respectively $\Lambda_{C_n}$) cannot have image in either $\zz^{r_1}$ or $\zz^{r_1+1}$ (respectively $\zz^{r_2}$ or $\zz^{r_2+1}$).  By \Cref{lem:Cemb}, this leaves the possibility that the restriction to $\Lambda_{C_k}$ lies in a $\zz^{4k}$ sublattice, and similarly the restriction to $\Lambda_{C_n}$ lies in a $\zz^{4n}$ sublattice, in both cases with the orthogonal complement in said sublattice containing no unit vectors.
In particular we have
$$4n\le 2k+2n+3,$$
and hence $n$ is either $k$ or $k+1$.

If $n=k+1$, we have $m=4n+1$.  
Up to $\Aut(\zz^m)$, we may suppose that the $\zz^{4n-4}$ sublattice containing the image of $\Lambda_{C_k}$ includes the vectors $-e_1+e_2,-e_2+e_3$ as the image of the two adjacent weight two vertices.  The $\zz^{4n}$ sublattice of $\zz^{4n+1}$ containing the image of $\Lambda_{C_n}$ has to intersect the $\zz^3$ sublattice spanned by $e_1,e_2,e_3$ nontrivially.  This means that some vertex of $\Lambda_{C_n}$ maps to a vector of the form $v+a(e_1+e_2+e_3)$, where $v$ is a nonzero vector in the span of $e_4,\dots,e_m$ and $a\ne0$, noting that the image of this vertex is orthogonal to $-e_1+e_2,-e_2+e_3$ and has pairing $-1$ with a neighbouring vertex.  This contradicts the fact that all vertices in $\Lambda_{C_n}$ have weight 2 or 3.

Finally if $n=k$ then $m=4n+3$.  We keep the notation $\Lambda_{C_n}$ and $\Lambda_{C_k}$ to distinguish the two copies of $\Lambda_{C_n}$.  We may suppose that the $\zz^{4n}$ sublattice containing the image of $\Lambda_{C_k}$ is the span of $e_1,\dots,e_{4n}$, and that it includes the vectors $-e_1+e_2,-e_2+e_3$ as the image of the two adjacent weight two vertices.   Arguing as in the case $n=k+1$, the image of $\Lambda_{C_n}$ has to be orthogonal to the span of $e_1, e_2, e_3$, and so is contained in the span of $e_4,\dots,e_m$.  We may also suppose that the two adjacent weight two vertices in $\Lambda_{C_n}$ map to $-e_{4n+1}+e_{4n+2},-e_{4n+2}+e_{4n+3}$.
We consider the image of $\Lambda_{C_n}$ and $\Lambda_{C_k}$ under the projection to $\zz^{4n-3}$ spanned by $e_4,e_5,\dots,e_{4n}$.  From \eqref{eq:Cemb3} we see that each of these is isomorphic to the orthogonal direct sum $\Lambda(3^{n-2},2)\oplus\Lambda(2,3^{n-2},2)$, and has rank $2n-1$.   This leads to a contradiction, since it is not possible to orthogonally embed two lattices of rank $2n-1$ in $\zz^{4n-3}$.
\end{proof}


\clearpage

\bibliographystyle{amsplain}
\bibliography{balls}

\end{document}